\newtheorem{theorem}{Theorem} [section]
\newtheorem{lemma}[theorem]{Lemma}
\newtheorem{proposition}[theorem]{Proposition}
\newtheorem{remark}[theorem]{Remark}
\newtheorem{definition}[theorem]{Definition}
\newtheorem{corollary}[theorem]{Corollary}
\DeclareMathOperator*{\supp}{supp}
\newcommand{\noi}{\noindent}
\newcommand{\Z}{\mathbb{Z}}
\newcommand{\R}{\mathbb{R}}
\newcommand{\C}{\mathbb{C}}
\newcommand{\T}{\mathbb{T}}
\renewcommand{\L}{\mathcal{L}}
\newcommand{\al}{\alpha}
\newcommand{\eps}{\varepsilon}
\newcommand{\G}{\Gamma}
\newcommand{\ld}{\lambda}
\newcommand{\Ld}{\Lambda}
\newcommand{\s}{\sigma}
\newcommand{\ft}{\widehat}
\newcommand{\Ft}{{\mathcal{F}}}
\newcommand{\wt}{\widetilde}
\newcommand{\dx}{\partial_x}
\newcommand{\dy}{\partial_y}
\newcommand{\dt}{\partial_t}
\renewcommand{\l}{\ell}
\newcommand{\les}{\lesssim}
\newcommand{\jb}[1]
{\langle #1 \rangle}
\newcommand{\N}{\mathbb{N}}
\newcommand{\Aa}{\mathcal M_0}
\newcommand{\Nn}{\mathcal N}
\newcommand{\Bn}{\mathcal B}
\newcommand{\uu}{\mathfrak u}
\newcommand{\tu}{w}
\DeclareMathOperator*{\smm}{\max{\hspace*{-2pt}}^{(2)}}
\numberwithin{equation}{section}
\numberwithin{theorem}{section}
\let\Re=\undefined\DeclareMathOperator*{\Re}{Re}
\let\Im=\undefined\DeclareMathOperator*{\Im}{Im}
\begin{document}
\baselineskip = 14pt

\title[Norm inflation for a periodic derivative higher-order NLS]
{Norm inflation for
a higher-order
\\
nonlinear Schr\"odinger equation
\\
with a derivative
on the circle
}

\author[T.~Kondo and M.~Okamoto]
{Toshiki Kondo and Mamoru Okamoto}

\address{
Toshiki Kondo\\
Department of Mathematics\\
Graduate School of Science\\ Osaka University\\
Toyonaka\\ Osaka\\ 560-0043\\ Japan
}
\email{u534463k@ecs.osaka-u.ac.jp}

\address{
Mamoru Okamoto\\
Department of Mathematics\\
Graduate School of Science\\ Osaka University\\
Toyonaka\\ Osaka\\ 560-0043\\ Japan}
\email{okamoto@math.sci.osaka-u.ac.jp}

\subjclass[2020]{35Q55}

\keywords{Schr\"odinger equation;
ill-posedness; norm inflation; unconditional uniqueness}

\begin{abstract}
We consider a periodic higher-order nonlinear Schr\"odinger equation
with the nonlinearity $u^k \dx u$,
where $k$ is a natural number.
We prove the norm inflation in a subspace of the Sobolev space $H^s(\T)$ for any $s \in \R$.
In particular,
the Cauchy problem is ill-posed in $H^s(\T)$ for any $s \in \R$.
\end{abstract}

\date{\today}
\maketitle
%


\section{Introduction}

\label{SEC:1}

We consider the Cauchy problem for the following higher-order nonlinear Schr\"odinger equation (NLS)
with a derivative:
\begin{equation} 
\left\{
\begin{aligned}
&\dt u - i (-\dx^2)^{\frac \al2} u = \ld u^k \dx u, \\ 
&u|_{t=0} = \phi,
\end{aligned}
\right.
\label{NLSka}
\end{equation}
where $\al>0$, $\ld \in \C \setminus \{ 0 \}$, $k \in \N$,
and $(-\dx^2)^{\frac \al2}$ denotes the Fourier multiplier with the symbol $|n|^\al$
(See the end of this section for notation).
Here,
$\T := \R / 2\pi \Z$, $u = u(t,x) : \R \times \T \to \C$ is an unknown function,
and $\phi : \T \to \C$ is a given function.
Our main goal in this paper is to prove the ill-posedness of \eqref{NLSka}.

The linear case
\eqref{NLSka} with $k=0$, namely,
\begin{equation}
\left\{
\begin{aligned}
&\dt u - i (-\dx^2)^{\frac \al2} u = \ld \dx u, \\ 
&u|_{t=0} = \phi,
\end{aligned}
\right.
\label{NLSk0}
\end{equation}
is well-posed in $L^2(\T)$
if and only if $\Im \ld =0$.
Since \eqref{NLSk0} has constant coefficients,
this equivalence follows from a simple observation.
See \cite{Chi02, Miz06, Miz85} for variable coefficients.
Indeed,
by setting
\begin{equation}
v(t,x) := e^{- it (-\dx^2)^{\frac \al2}} u(t,x),
\label{norv1}
\end{equation}
\eqref{NLSk0} is equivalent to
\begin{equation} 
\left\{
\begin{aligned}
&\dt v = \ld \dx v, \\ 
&v|_{t=0} = \phi.
\end{aligned}
\right.
\label{NLSk00}
\end{equation}
Accordingly,
$v(t,x) = \phi (x+ \ld t)$ solves \eqref{NLSk00} for $\Im \ld =0$,
which implies the well-posedness in $L^2(\T)$.
Moreover,
for $N \in \N$ and $\phi (x) =  e^{iNx}$,
the solution to \eqref{NLSk00} is $v(t,x) = e^{i t \ld N} e^{i N x}$.
Then, since $\| v(t) \|_{L^2} = e^{- t (\Im \ld) N}$,
\eqref{NLSk00} is ill-posed in $L^2(\T)$ if $\Im \ld \neq 0$.

For \eqref{NLSka} with $k \in \N$,
by taking the transformation $u \mapsto \ld^{\frac 1k} u$,
we may assume $\ld = 1$.
In what follows,
we only consider \eqref{NLSka} with $k \in \N$ and $\ld=1$:
\begin{equation} 
\left\{
\begin{aligned}
&\dt u - i (-\dx^2)^{\frac \al2} u = u^k \dx u, \\ 
&u|_{t=0} = \phi.
\end{aligned}
\right.
\label{NLSk}
\end{equation}

When $\al=2$,
Chihara \cite{Chi02}
proves the ill-posedness in the Sobolev space $H^s(\T)$ for any $s \in \R$.
Moreover,
Christ \cite{Chr03} shows the norm inflation
with infinite loss of regularity.
Namely,
for any $s, \s \in \R$,
a solution with a smooth initial data $\phi$ and $\| \phi \|_{H^s} \ll 1$
exhibits a large $H^\s$-norm in a short time.
On the other hand,
Chung, Guo, Kwon, and Oh \cite{CGKO17} prove
the well-posedness in $L^2(\T)$ under the mean-zero and smallness assumptions
when $\al=2$ and $k=1$.

We emphasize that the structure of nonlinearity plays an important role in obtaining well-posedness.
Indeed,
by using the energy method,
Ambrose and Simpson \cite{AmSi15}
prove the well-posedness in $H^2(\T)$ of the Cauchy problem
for the generalized derivative
NLS
\[
\left\{
\begin{aligned}
&\dt u + i \dx^2 u = |u|^k \dx u,
\\
&u|_{t=0} = \phi
\end{aligned}
\right.
\]
for $k \ge 2$.
See also \cite{DNY21, Hao07, Her06, KiTs18,  KiTs23, Tak99}
for periodic NLS with a derivative.
Note that the energy method does not work for \eqref{NLSk}.
See also Remark \ref{rem:KdV} below.

Before stating the main result,
we define a solution to \eqref{NLSk}.

\begin{definition}
\label{def:sol}
Let $s \in \R$, $T > 0$, and $\phi \in H^s(\T)$.
We say that $u$ is a solution to \eqref{NLSk} in $H^s(\T)$ on $[0,T]$
if $u$ satisfies the followings:
\begin{enumerate}
\item
$u \in C([0,T]; H^s(\T)) \cap L^{k+1}_{\text{loc}} ([0,T) \times \T)$,

\item
For any $\chi \in C_c^\infty([0,T) \times \T)$,%
\footnote{Here, $C_c^\infty([0,T) \times \T)$ denotes the space of smooth functions with compact support in $[0,T) \times \T$.
}
we have
\begin{align*}
&- \int_0^T \int_{-\pi}^\pi u (t,x) \dt \chi(t,x) dx dt
- \int_{-\pi}^\pi \phi (x) \chi (0,x) dx
\\
&\quad
-i \int_0^T \int_{-\pi}^\pi u(t,x) (-\dx^2)^{\frac \al 2} \chi(t,x) dx dt
\\
&=
-\frac 1{k+1}
\int_0^T \int_{-\pi}^\pi u(t,x)^{k+1} \dx \chi (t,x) dx dt.
\end{align*}
\end{enumerate}
\end{definition}

The condition (ii) in Definition \ref{def:sol} means that
$u$ satisfies \eqref{NLSk} in the sense of distribution.

\begin{remark}
\rm
\label{rem:sol1}
Let $u \in C([0,T]; H^s(\T))$ be a solution to \eqref{NLSk}.
When $s>\frac 12$,
it holds that
\[
u^{k+1} \in C([0,T]; H^s(\T)),
\quad
u^k \dx u = \frac 1{k+1} \dx(u^{k+1}) \in C([0,T]; H^{s-1}(\T)).
\]
Accordingly,
if $\al \ge 1$ and $s>\frac 12$,
we have
$u \in C^1([0,T]; H^{s-\al}(\T))$
and
\[
\dt u -i (-\dx^2)^{\frac \al2} u = u^k \dx u
\]
holds in $H^{s-\al}(\T)$ for every $t \in [0,T]$.
\end{remark}

For $s \in \R$, define
\[
H^s_{\ge 0}(\T)
:=
\{ f \in H^s (\T) \mid \ft f(n) =0 \text{ for } n<0 \}.
\]
Note that $H^s_{\ge 0} (\T)$ is a closed subspace of $H^s(\T)$.
The following is the main result in the present paper.  

\begin{theorem}
\label{th:1.1}
Assume that $\al=2$ or $\al \ge 3$.
Set
\begin{equation}
s_0 := \begin{cases} 2 & \text{ if } \al=2, \\
1 & \text{ if } \al \ge 3.
\end{cases}
\label{s0}
\end{equation}
Let $k \in \N$, $s \ge s_0$, and $\s \in \R$.
Then,
for any $\eps> 0$,
there exist an initial data $\phi \in C^\infty(\T)$ with $\|\phi\|_{H^{s}(\T)} < \eps$
and
a time $T \in (0,\eps)$
satisfying one of the following:
\begin{enumerate}
\item
there is no solution $u \in C([0,T]; H^{s}_{\ge 0}(\T))$ to \eqref{NLSk},

\item
there is a solution $u \in C([0,T]; H^{s}_{\ge 0}(\T))$ to \eqref{NLSk} such that
\[
\|u(T)\|_{H^\s} > \eps^{-1}.
\]
\end{enumerate}
\end{theorem}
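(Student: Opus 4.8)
The plan is to exploit a small \emph{complex} zero‑frequency mode in the initial data: it is conserved by the flow, and on $H^s_{\ge0}(\T)$ it forces the $N$‑th Fourier coefficient of any solution to grow like $e^{cNt}$, which outpaces every power of $N$ and hence inflates all Sobolev norms in a short time. Given $\eps>0$ and $\s\in\R$, I would fix $a\in\C$ with $|a|$ small and $\Im(a^{k})<0$ — for instance $a=\rho\,e^{-i\pi/(2k)}$ with $\rho>0$ small, so that $a^{k}=-i\rho^{k}$ — and set $\mu:=-\Im(a^{k})>0$. For a large $N\in\N$ (to be chosen) I would take
\[
\phi=\phi_N:=a+b_N\,e^{iNx},\qquad b_N:=\tfrac{\eps}{4}\,\jb{N}^{-s},\qquad T:=\tfrac{\eps}{2}\in(0,\eps).
\]
Then $\phi_N\in C^\infty(\T)\cap H^s_{\ge0}(\T)$, and after shrinking $|a|$ if necessary $\|\phi_N\|_{H^s}<\eps$.

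Then I would argue by a dichotomy. If \eqref{NLSk} with datum $\phi_N$ has no solution in $C([0,T];H^s_{\ge0}(\T))$, then alternative (i) holds and we are done. Otherwise let $u$ be such a solution. Since $\al\ge1$ and $s\ge s_0>\tfrac12$, Remark \ref{rem:sol1} gives $u^{k+1}\in C([0,T];H^s)$ and $\dt u-i(-\dx^2)^{\al/2}u=\tfrac1{k+1}\dx(u^{k+1})$ in $H^{s-\al}$ for every $t$. Taking the $n$‑th Fourier coefficient and using $\widehat u(t,n)=0$ for $n<0$ (since $u(t)\in H^s_{\ge0}$), one obtains, for each $n\ge0$,
\begin{equation*}
\dt\widehat u(t,n)=i|n|^{\al}\,\widehat u(t,n)+\frac{in}{k+1}\,\widehat{u^{k+1}}(t,n),\qquad \widehat u(0,n)=\widehat{\phi_N}(n),
\end{equation*}
where $\widehat{u^{k+1}}(t,n)=\sum_{\substack{n_1+\dots+n_{k+1}=n\\ n_1,\dots,n_{k+1}\ge0}}\prod_{j}\widehat u(t,n_j)$ is a finite sum (each $n_j\in\{0,\dots,n\}$).

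The crux is that, thanks to the nonnegativity of frequencies, the modes $n=0,\dots,N-1$ form a \emph{closed} autonomous ODE system on $\C^{N}$ with polynomial (hence locally Lipschitz) vector field, and its initial value $(\widehat{\phi_N}(0),\dots,\widehat{\phi_N}(N-1))=(a,0,\dots,0)$ is an equilibrium point. By uniqueness for this ODE, $\widehat u(t,n)=a\,\delta_{n,0}$ for $0\le n\le N-1$ and all $t\in[0,T]$. Plugging this into the equation for the mode $N$, the only nonzero contribution to $\widehat{u^{k+1}}(t,N)$ comes from the decomposition $N=N+0+\dots+0$, so $\widehat{u^{k+1}}(t,N)=(k+1)\,a^{k}\,\widehat u(t,N)$ and hence $\dt\widehat u(t,N)=iN\bigl(N^{\al-1}+a^{k}\bigr)\widehat u(t,N)$, which yields $|\widehat u(t,N)|=|b_N|\,e^{-tN\Im(a^{k})}=|b_N|\,e^{\mu Nt}$.

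Finally, $\|u(T)\|_{H^\s}\ge\jb{N}^{\s}\,|\widehat u(T,N)|=\tfrac{\eps}{4}\,\jb{N}^{\s-s}\,e^{\mu\eps N/2}$, and since the exponential factor dominates every power of $N$ this tends to $+\infty$ as $N\to\infty$; choosing $N$ large enough that it exceeds $\eps^{-1}$ puts us in alternative (ii). The main point requiring care is the passage from the distributional solution of Definition \ref{def:sol} to the mode‑wise ODEs above — this is where the hypotheses $\al\ge1$ and $s\ge s_0$ enter, via Remark \ref{rem:sol1} — and it should be emphasized that the restriction to $H^s_{\ge0}(\T)$ is essential: in the full space $H^s(\T)$ the low modes $0,\dots,N-1$ would couple to arbitrarily large positive and negative frequencies, the finite system would fail to close, and this mechanism would break down.
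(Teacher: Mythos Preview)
Your argument is correct, and the growth mechanism (a complex conserved zero mode forcing exponential amplification of the $N$th coefficient) is precisely the one the paper identifies in its introduction. Where you diverge from the paper is in justifying $\hat u(t,n)=0$ for $1\le n\le N-1$. The paper first subtracts the zero mode (the transformation \eqref{truv}--\eqref{NLSkk}) and then appeals to the unconditional uniqueness of Corollary~\ref{cor:UU}, whose proof occupies all of Section~\ref{sec:AWP}: a normal-form reduction for $\al\ge3$ and a gauge transformation for $\al=2$. You instead note that, because frequencies are nonnegative, the coefficients $\hat u(t,0),\dots,\hat u(t,N-1)$ satisfy a \emph{closed} polynomial ODE in $\C^{N}$ for which the initial point $(a,0,\dots,0)$ is an equilibrium, and invoke Picard--Lindel\"of; the passage from Definition~\ref{def:sol} to pointwise ODEs is supplied by Remark~\ref{rem:sol1} and in fact uses only $s>\tfrac12$, $\al\ge1$. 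This sidesteps the entire unconditional-uniqueness machinery. A cosmetic difference is that the paper lets both $|a|$ and $T$ shrink with $N$ (obtaining a growth factor $N^{|\s-s|+1}$), whereas you fix $|a|$ and $T=\eps/2$ and let the exponential $e^{\mu N\eps/2}$ dominate; both schemes meet the constraints of the statement. The paper's route buys an unconditional-uniqueness result in $H^{s_0}_+(\T)$ of independent interest; yours buys a substantially shorter proof of Theorem~\ref{th:1.1} itself.
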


Theorem \ref{th:1.1} shows
the norm inflation
with infinite loss of regularity.
In particular,
the flow map in $H^s(\T)$ for $s \in \R$, if exists, is not a continuous extension of that in $H^{\max (s,s_0)}_{\ge 0} (\T)$.
In this sense, \eqref{NLSk} is ill-posed in $H^s(\T)$ for any $s \in \R$.
This is a generalization of the result by \cite{Chi02, Chr03}.%
\footnote{Strictly speaking,
a solution in \cite{Chr03} may differ from that in Definition \ref{def:sol}.
In fact, the solution in \cite{Chr03} might have Fourier coefficients that increase exponentially.
See (3.8) in \cite{Chr03}.
}
In other words,
Theorem \ref{th:1.1} says that
the derivative loss of \eqref{NLSk} on $\T$
never recover
even for the higher-order NLS,
which is a sharp contrast on $\R$.
In fact,
\eqref{NLSk}
is well-posed in $H^s(\R)$ for some $s$.
See \cite{HO95, HIT20, KPV93, Oza98, Por08}, for example.

Since it is unclear whether a solution to \eqref{NLSk} exists even if initial data are smooth,
the case (i) in Theorem \ref{th:1.1} might happen.
Hence,
Theorem \ref{th:1.1} implies
that
the flow map of \eqref{NLSk}, if exists,
is discontinuous in $H^s_{\ge 0}(\T)$ for $s \ge s_0$.
On the other hand,
we show the case (ii) in Theorem \ref{th:1.1} for $\s<s_0$
assuming the existence of a solution in $H^{s_0}_{\ge 0}(\T)$ (not in $H^\s_{\ge 0} (\T)$).
Namely,
Theorem \ref{th:1.1} asserts non-existence of a continuous extension of the flow map in $H^{s_0}_{\ge 0}(\T)$ to $H^\s_{\ge 0}(\T)$ for $\s<s_0$.

In order to prove Theorem \ref{th:1.1},
we show that a similar situation arises with the linear equation \eqref{NLSk0}.
For $N \in \N$,
we consider a solution $u$ to \eqref{NLSk}
with $\phi \in H^s_{\ge 0}(\T)$ satisfying
\[
\ft \phi(n)=0
\]
for $n=1,\dots, N-1$.
From \eqref{NLSk},
we have
$\dt \ft u(t,0) =0$, 
namely,
$\ft u(t,0) = \ft \phi(0)$
for $0 \le t \le T$.
Once
we obtain
\begin{equation}
\ft u(t,n) =0 
\quad \text{ for $n=1,\dots, N-1$},
\label{unexp}
\end{equation}
then
$\ft u(t,N)$ satisfies
\[
\dt \ft u(t,N) -i N^\al \ft u(t,N) = iN \ft \phi(0)^k \ft u(t,N).
\]
Namely,
$\ft u(t,N) = e^{it \ft \phi(0)^k N} e^{it N^\al}$.
Hence,
if $\Im ( \ft \phi(0)^k) <0$,
we obtain the desired result.
However,
it is not clear whether \eqref{unexp} holds for a solution to \eqref{NLSk} in the sense of Definition \ref{def:sol}.

To obtain \eqref{unexp},
we employ the unconditional uniqueness of \eqref{NLSk} in $H^{s_0}_+(\T)$,
where $s_0$ is given in \eqref{s0}.
Here,
$H^s_+(\T)$ denotes
\[
H^s_+ (\T)
:=
\{ f \in H^s_{\ge 0} (\T) \mid \ft f(0) =0 \}
\]
for $s \in \R$.
Moreover,
``unconditional'' means that uniqueness of the solution in the sense of Definition \ref{def:sol} holds in the entire space
$C([0,T]; H^{s_0}_+(\T))$.
Since the dispersive effect in the nonlinear terms does not vanish if $\phi \in H^s_+(\T)$,
we can recover a derivative loss in \eqref{NLSk}.
See \cite{CLW23, NaWa}
for well-posedness results in a space of distributions whose Fourier support is in the half space.

We prove the unconditional uniqueness of \eqref{NLSk} in $H^{s_0}_+ (\T)$ in Section \ref{sec:AWP}.
The unconditional uniqueness for $\al \ge 3$ follows from the normal form reduction as in \cite{Arn88, Nik86}.
Although the abstract framework in \cite{Kis19} applies to \eqref{NLSk},
we give a proof in Subsection \ref{subsec:NFR} for readers' convenience.
By applying an infinite iteration scheme of normal form reductions as in \cite{CGKO17},
we might obtain the unconditional uniqueness for $\al=2$.
However,
to avoid some technical difficulties,
we use a gauge transformation for $\al=2$ as in \cite{Oza98}
instead of the normal form reduction.
See also \cite{Chi94, HO95}.

\begin{remark}
\rm
We expect that the unconditional uniqueness holds for $2<\al<3$,
if we apply the normal form reduction many times.
However,
since we focus on the ill-posedness of the higher-order NLS,
we do not pursue the case $2<\al<3$ in this paper.
\end{remark}

\begin{remark}
\label{rem:KdV}
\rm

We can replace
$(-\dx^2)^{\frac \al2}$ in \eqref{NLSk} by
$-i(-\dx^2)^{\frac{\al-1}2} \dx$,
since we consider a solution in $H^s_{\ge 0}(\T)$.
Namely,
the same norm inflation result as in Theorem \ref{th:1.1} holds for
the higher-order Benjamin-Ono or Korteweg-de Vries equation:
\begin{equation} 
\left\{
\begin{aligned}
&\dt u - (-\dx^2)^{\frac{\al-1}2} \dx u = u^k \dx u, \\ 
&u|_{t=0} = \phi.
\end{aligned}
\right.
\label{KdVk}
\end{equation}
Note that we consider a solution $u \in C([0,T]; H^s_{\ge 0}(\T))$ to \eqref{KdVk}.
In particular, $u$ is a complex-valued function.
This assumption drastically changes the structure on the periodic setting.
In fact,
there are many well-posedness results for \eqref{KdVk} when $u$ is real-valued.
See
\cite{Cha21, GKT23, Jo24, KaTo06, KePi16, KLV24, KiSc21, MoTa22, MoPi23},
for example.
See also
\cite{Arn19, Hur18, Mol12}
for some negative results on the real-valued setting.

\end{remark}

\mbox{}

\noi
\textbf{Notation.}
Given an integrable function $f$,
define
\begin{align*}
\int_\T f(x) dx
&:= \frac 1{2\pi} \int_{-\pi}^\pi f(x) dx,
\\
\Ft[f](n)
=
\ft f(n)
&:=\int_{\T} f(x) e^{-inx} dx
\end{align*}
for $n \in \Z$.
Moreover,
let $\Ft [f]$ or $\ft f$ denote the Fourier coefficient of a periodic distribution $f$.
Note that the Fourier series expansion
\[
f(x) = \sum_{n \in \Z} \ft f(n) e^{inx}
\]
(in the sense of distribution)
holds for a periodic distribution $f$.
For $\al>0$ and $t \in \R$, we define
\begin{align*}
(-\dx^2)^{\frac \al2} f (x)
&:= \sum_{n \in \Z} |n|^\al \ft f(n) e^{inx},
\\
e^{it(-\dx^2)^{\frac \al2}} f (x)
&:= \sum_{n \in \Z} e^{it |n|^\al} \ft f(n) e^{inx}.
\end{align*}

We also use the notation $\ft u(t,n)$ to express the Fourier coefficient with respect to $x$ of a two variable function $u(t,x)$.
Note that $(-\dx^2)^{\frac \al2}$ acts only on $x$,
namely,
\[
(-\dx^2)^{\frac \al2} u(t,x)
:= \sum_{n \in \Z} |n|^\al \ft u(t,n) e^{inx}.
\]

For $s \in \R$,
we define $H^s(\T)$ to be the set of all periodic distributions
for which the norm
\[
\| f \|_{H^s}
:=
\bigg(\sum_{n \in \Z} \jb{n}^{2s} |\ft f(n)|^2 \bigg)^{\frac12}
\]
is finite,
where $\jb{n} := \sqrt{1 + n^2}$ for $n \in \Z$.
Note that Parseval's identity implies $\| f \|_{H^0} = \| f \|_{L^2}$.
In particular, we have $H^0(\T) = L^2(\T)$.

We use the notation $A \les B$ if there is a constant $C>0$
(depending only on $\al$, $k$, $s$, and $\s$ in Theorem \ref{th:1.1})
such that $A \le C B$.
We also denote $A \ll B$ when $A \le CB$ with sufficiently small $C>0$.

\section{Unconditional uniqueness}
\label{sec:AWP}

In this section,
we prove the
unconditional uniqueness
of \eqref{NLSk}.

\begin{proposition}
\label{prop:welH+}
Assume that $\al=2$ or $\al \ge 3$.
Let $k \in \N$ and $\phi \in H^{s_0}_+(\T)$ with $\| \phi \|_{H^{s_0}} \ll 1$,
where $s_0$ is defined by \eqref{s0}.
Then,
for any $T \in (0,1]$,
a solution $u \in C([0,T]; H^{s_0}_+(\T))$ to \eqref{NLSk} is unique.
Moreover,
$u$ satisfies
\begin{equation}
\ft u(t,n) =0
\label{supp0}
\end{equation}
unless there exist $m \in \N$ and $n_1, \dots, n_m \in \supp \ft \phi$
such that $n= \sum_{\l=1}^m n_\l$.
\end{proposition}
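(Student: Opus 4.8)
The plan is to establish the two assertions — uniqueness and the Fourier-support property \eqref{supp0} — at once, by reducing each of them to the statement that a certain auxiliary function, which solves a \emph{homogeneous} equation with vanishing initial data, must vanish identically. First I would pass to a workable formulation. Since $s_0>\tfrac12$ and $\al\ge1$, Remark~\ref{rem:sol1} shows that any solution $u\in C([0,T];H^{s_0}_+(\T))$ belongs to $C^1([0,T];H^{s_0-\al}(\T))$, solves \eqref{NLSk} in $H^{s_0-\al}(\T)$, and hence satisfies Duhamel's formula
\[
u(t)=e^{it(-\dx^2)^{\al/2}}\phi+\frac1{k+1}\int_0^t e^{i(t-t')(-\dx^2)^{\al/2}}\dx\big(u(t')^{k+1}\big)\,dt'.
\]
Writing $v_n(t):=e^{-it|n|^\al}\ft u(t,n)$ and using that all relevant frequencies are positive (the Fourier support lies in $\N$), the phase produced after expanding $\ft{u^{k+1}}$ is $\Phi=n^\al-\sum_{\l=1}^{k+1}n_\l^\al$ with $n=\sum_{\l=1}^{k+1}n_\l$ and $n_\l\ge1$. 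Strict convexity of $t\mapsto t^\al$ for $\al>1$ gives, since $k+1\ge2$, that $\Phi>0$ everywhere — there are \emph{no resonances} — and an elementary mean-value estimate improves this to $|\Phi|\gtrsim\jb n^{\al-1}$ on the relevant frequency set.

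Next I would build the two auxiliary equations. For uniqueness, given solutions $u_1,u_2$, set $v:=u_1-u_2$; then $v(0)=0$ and $\dt v-i(-\dx^2)^{\al/2}v=\tfrac1{k+1}\dx(v\,Q)$ with $Q:=\sum_{j=0}^{k}u_1^{j}u_2^{k-j}\in C([0,T];H^{s_0}(\T))$. For the support property, let $\G$ be the sub-semigroup of $(\N,+)$ generated by $\supp\ft\phi$, and set $v:=P_{\Z\setminus\G}u$, the Fourier projection of $u$ onto $\Z\setminus\G$; then $v(0)=0$ because $\supp\ft\phi\subseteq\G$, and the polynomial identity $u^{k+1}-(P_\G u)^{k+1}=v\,R$ together with $P_{\Z\setminus\G}\big((P_\G u)^{k+1}\big)=0$ (valid since $\G+\G\subseteq\G$) yields $\dt v-i(-\dx^2)^{\al/2}v=\tfrac1{k+1}\dx\,P_{\Z\setminus\G}(v\,R)$, with $R$ a polynomial of degree $k$ in $u$. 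In either case the desired conclusion reduces to: the unique $C([0,T];H^{s_0})$ solution of a linear-in-$v$ equation of schematic form $\dt v-i(-\dx^2)^{\al/2}v=\dx\big(v\cdot(\text{degree-}k\text{ factor built from }u)\big)$ with $v(0)=0$ is $v\equiv0$.

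For $\al\ge3$ I would close this by a single \textbf{normal form reduction}: one integration by parts in $t'$ in the interaction-representation Duhamel formula for $v$ — legitimate precisely because $\Phi$ never vanishes — produces a boundary term carrying the weight $n/\Phi\lesssim\jb n^{-(\al-2)}\le1$ and, after substituting the equation for the time derivative falling on one factor, a new space-time integral of degree $2k+1$ carrying the weight $n\,n_\l/\Phi\lesssim\jb n^{3-\al}\le1$. Since $H^{s_0}(\T)$ is an algebra ($s_0=1>\tfrac12$), both pieces map $C([0,T];H^{s_0})$ boundedly to itself; the boundary term is bounded by a small multiple of $\|v(t)\|_{H^{s_0}}$ using $\|u_i\|_{C([0,T];H^{s_0})}\ll1$ (propagated from $\|\phi\|_{H^{s_0}}\ll1$ by a routine bootstrap, or absorbed into a short time step), and the integral gains a factor $T$; a Gronwall argument on short subintervals, iterated across $[0,T]$, forces $v\equiv0$. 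For $\al=2$ a single normal form reduction only breaks even ($\jb n^{-(\al-2)}=1$), so instead I would use the \textbf{Ozawa-type gauge transform}: because $u$ has Fourier support in $\N$, $u^{k}$ has zero mean, so $\psi:=\tfrac12\dx^{-1}(u^{k})$ is well defined with Fourier support in $\N$, and $w:=e^{-i\psi}u$ solves a Schr\"odinger equation in which the derivative-losing contributions produced by $\dx^2$ and by $\dt\psi$ cancel exactly; what remains is, for $k=1$, simply the free equation, and for $k\ge2$ a nonlinearity of the form $e^{-i\psi}u\cdot\dx^{-1}\!\big(u^{k-2}(\dx u)^2\big)$, which loses no derivative since $s_0=2>\tfrac32$. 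Uniqueness then follows from a direct $H^{s_0}$ energy/Gronwall estimate for the difference (the dispersive term contributing nothing, being skew-adjoint); and since the gauge — built from products and $\dx^{-1}$ — maps the space of distributions Fourier-supported in $\G$ into itself, so does its inverse, so $P_{\Z\setminus\G}$ of the gauged solution solves a homogeneous equation with zero data, hence vanishes, and transforming back gives \eqref{supp0}.

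I expect the main obstacle to be the bookkeeping in the $\al\ge3$ normal form reduction: after substituting the equation one must verify that \emph{every} frequency appearing in the new $(2k+1)$-linear term is $\lesssim\jb n$, so that the accumulated weight genuinely is $\lesssim1$, and then estimate this multilinear term together with the boundary term in $H^{s_0}$ by fractional Leibniz without reintroducing a derivative loss. For $\al=2$ the analogous point is checking rigorously that the gauged equation is derivative-loss free for every $k\in\N$ (in particular that the $\dx^{-1}$-term closes in $H^2$, its coefficient vanishing when $k=1$) and that $u\mapsto e^{-i\psi}u$ is a homeomorphism of a small ball in $H^{s_0}_+(\T)$. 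The reduction of the distributional solution to the Duhamel formulation and the propagation of smallness of $\|u(t)\|_{H^{s_0}}$ are routine by comparison.
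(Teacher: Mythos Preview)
Your overall strategy matches the paper's: a single normal form reduction for $\al\ge3$ and an Ozawa-type gauge for $\al=2$, with smallness of $\|\phi\|_{H^{s_0}}$ propagated by an a~priori bound. Two differences are worth noting. First, your gauge acts on $u$ itself, whereas the paper sets $\uu:=e^{-\Ld}\dx u$ with $\Ld=\tfrac1{2i}\int_0^x u^k$ and works with the system \eqref{gasys} for the pair $(u,\uu)$; both choices kill the derivative loss, though your claim that the gauged equation becomes the \emph{free} equation when $k=1$ is not quite right --- constant-in-$x$ terms involving $u(t,0)^{2}$ and $\dx u(t,0)$ survive (compare \eqref{LLd}). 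Second, for the Fourier-support property \eqref{supp0} the paper argues via the Picard iterates implicit in the contraction-mapping proof: each iterate is built from $\phi$ by products, $\dx^{-1}$, exponentials and linear propagators, all of which preserve support in the semigroup $\G$, hence so does the limit. Your projection approach --- showing that $v=P_{\Z\setminus\G}u$ solves a linear-in-$v$ equation with zero data --- is a clean alternative for $\al\ge3$, but for $\al=2$ your sketch is circular as written: the gauged equation has coefficients built from the \emph{ungauged} $u$, so applying $P_{\Z\setminus\G}$ does not yield a closed equation for $P_{\Z\setminus\G}w$ unless you already know $u$ is $\G$-supported. This is repairable --- the gauged right-hand side, viewed as a nonlinear map, sends $\G$-supported input to $\G$-supported output, and then either a Lipschitz/Gronwall argument on the projected system or simply the paper's Picard-iteration reasoning closes the gap.
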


The regularity assumption in Proposition \ref{prop:welH+} is not optimal;
nevertheless, it suffices to prove our main result,
since \eqref{supp0} holds.
Note that there is no requirement for $T$ to be small.

We also consider the Cauchy problem \eqref{NLSk} with $u^k$ replaced by a polynomial of $u$:
\begin{equation} 
\left\{
\begin{aligned}
&\dt u - i (-\dx^2)^{\frac \al2} u = \Big( \sum_{\l=1}^k \ld_\l u^\l \Big) \dx u, \\ 
&u|_{t=0} = \phi,
\end{aligned}
\right.
\label{NLSkg}
\end{equation}
where $\ld_1, \dots, \ld_k$ are complex constants.

\begin{corollary}
\label{cor:UU}
%
The same statement as in Proposition \ref{prop:welH+} is true for \eqref{NLSkg}.
\end{corollary}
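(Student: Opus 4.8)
To prove Corollary \ref{cor:UU} the plan is to rerun the proof of Proposition \ref{prop:welH+} essentially verbatim, after noting that neither the normal form reduction (for $\al \ge 3$) nor the gauge transformation (for $\al=2$) uses anything about the nonlinearity beyond its being a finite linear combination of terms of the form $\dx(u^{\l+1})$ with $1 \le \l \le k$. Indeed,
\[
\Big( \sum_{\l=1}^k \ld_\l u^\l \Big) \dx u
= \sum_{\l=1}^k \frac{\ld_\l}{\l+1}\, \dx\big(u^{\l+1}\big),
\]
and for $\phi \in H^{s_0}_+(\T)$ every factor $u$ occurring in the Duhamel iteration has Fourier support in $\{n \ge 1\}$, so each monomial $u^{\l+1}$ is mean-zero with Fourier support in $\{n \ge \l+1\}$; this, together with the finiteness of the sum, is all that will be used below.

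For $\al \ge 3$ I would apply the normal form reduction from the proof of Proposition \ref{prop:welH+} to each summand $\dx(u^{\l+1})$ separately. The only structural ingredient is the lower bound
\[
|\Phi_\l(n;n_1,\dots,n_{\l+1})| \ges \jb{n}^{\al-1},
\qquad
\Phi_\l := |n|^\al - \sum_{j=1}^{\l+1}|n_j|^\al,\quad n = \sum_{j=1}^{\l+1} n_j,\quad n_j \ge 1,
\]
which follows from the convexity of $t \mapsto t^\al$ as in the proof of Proposition \ref{prop:welH+} and holds uniformly over $1 \le \l \le k$ (with an implicit constant depending on $k$); since $\al \ge 3$, the gain $\jb{n}^{\al-1}$ still dominates the single derivative coming from $\dx$. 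Hence the boundary and remainder terms produced by the normal form reduction obey the same multilinear estimates for every $\l$, and summing the finitely many contributions yields both the uniqueness in $C([0,T];H^{s_0}_+(\T))$ and the support property \eqref{supp0} for \eqref{NLSkg} --- the latter being the assertion that $\ft u(t,\cdot)$ stays supported in the additive monoid generated by $\supp \ft \phi$, which is preserved because each iteration adjoins only finitely many frequencies from $\supp \ft \phi$.

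For $\al=2$ I would replace the gauge transformation used in the proof of Proposition \ref{prop:welH+} (following \cite{Oza98}) by
\[
G(u) := \frac i2 \sum_{\l=1}^k \ld_\l\, \dx^{-1}\big(u^\l\big),
\]
which is well defined and smoothing on $H^{s_0}_+(\T)$ because every $u^\l$ is mean-zero with positive frequencies, and set $\tu := e^{-G(u)} u$. Substituting $\dt u = i\dx^2 u + \big(\sum_\l \ld_\l u^\l\big)\dx u$ and integrating by parts under $\dx^{-1}$, one finds that the equation for $\tu$ no longer contains the worst interaction $\big(\sum_\l \ld_\l u^\l\big)\dx u$, the surviving nonlinearity carrying at most one derivative in a favorable configuration. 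Since $G(u)$ has Fourier support in $\{n \ge 0\}$, the correspondence $u \leftrightarrow \tu$ is, for $\|\phi\|_{H^{s_0}} \ll 1$, a bi-Lipschitz bijection between balls around the origin in $H^{s_0}_+(\T)$ preserving the monoid generated by $\supp \ft \phi$; hence the energy estimate for $\tu$ yields the uniqueness and \eqref{supp0} for \eqref{NLSkg} on any $[0,T]$ with $T \le 1$, exactly as in the case $\ld_2 = \dots = \ld_k = 0$.

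The step I expect to be the main obstacle is the $\al=2$ gauge computation: one must verify that the removal of the leading-order term --- which occurs through a cancellation between the $i(\dx^2 G)u$ contribution and the part of $-(\dt G)u$ produced by substituting the equation and integrating by parts --- persists for the full polynomial $\sum_\l \ld_\l u^\l$, and that the resulting (more numerous, but still finitely many) remainder terms satisfy the same energy bounds as in the monomial case. This cancellation is linear in $\sum_\l \ld_\l u^\l$, so it goes through verbatim; the complex coefficients $\ld_\l$ cause no trouble, since the argument only needs $e^{\pm G(u)}$ to be bounded on $H^{s_0}_+(\T)$, not unitary, and what remains is routine bookkeeping.
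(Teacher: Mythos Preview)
Your proposal is correct and in fact more detailed than the paper's own treatment, which dispatches Corollary~\ref{cor:UU} in a single sentence (``a straightforward adaptation'') and proceeds to prove only Proposition~\ref{prop:welH+}. For $\al \ge 3$ your argument is exactly the intended adaptation: apply the normal form reduction term by term, using that the phase lower bound \eqref{phase2} holds for every $1 \le \l \le k$ and then sum the finitely many contributions.

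For $\al = 2$ there is a minor but genuine difference in the gauge. The paper's adaptation would keep the structure of Subsection~\ref{subsec:GT}: gauge $\dx u$, not $u$, via $\uu := e^{-\Lambda}\dx u$ with $\Lambda := \tfrac{1}{2i}\sum_{\l=1}^{k}\ld_\l\,\dx^{-1}(u^\l)$, and obtain a system of the form \eqref{gasys} for the pair $(u,\uu)$ whose nonlinearity contains \emph{no} derivatives, so that a direct contraction in $C([0,T];H^1_+\times H^1_+)$ gives both uniqueness and \eqref{supp0}. Your variant gauges $u$ itself. The leading cancellation still occurs, but the surviving nonlinearity retains $\dx u$ implicitly (through terms such as $\int_0^x u^{\l-2}(\dy u)^2\,dy$ in $\L\Lambda$) and the map $u \mapsto \tu$ must be inverted, or a Gronwall argument run, before the estimate closes; this is correct but slightly less clean than the paper's derivative-free system. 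One small slip: with the paper's sign convention $\L = \dt + i\dx^2$ (equivalently $\dt u = -i\dx^2 u + \dots$), the gauge phase should be $\tfrac{1}{2i}\sum_\l \ld_\l\,\dx^{-1}(u^\l)$ rather than $\tfrac{i}{2}\sum_\l \ld_\l\,\dx^{-1}(u^\l)$; otherwise the worst term doubles instead of cancelling.
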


Since the proof of Corollary \ref{cor:UU} is a straightforward adaptation,
we only prove Proposition \ref{prop:welH+} in the next subsections.

\subsection{Normal form reduction}
\label{subsec:NFR}

In this subsection, we consider the case $\al \ge 3$ in Proposition \ref{prop:welH+}.
We use the normal form reduction as in \cite{Arn88, Kis19, Nik86}.

Let $u \in C([0,T]; H^{s_0}_+(\T))$ 
be a solution to \eqref{NLSk}.
Set
$v$ as in \eqref{norv1}.
Since $u$ solves \eqref{NLSk}
(see also Remark \ref{rem:sol1}),
$v$ satisfies
\begin{equation}
\dt \ft v(t,n)
= \frac{in}{k+1}
\sum_{\substack{n_1, \dots, n_{k+1} \in \N
\\
n_1+ \dots + n_{k+1} =n
}}
e^{it \Phi (n_1, \dots, n_{k+1})}
\prod_{\l=1}^{k+1}
\ft v(t,n_\l),
\label{vn1}
\end{equation}
where $\Phi(n_1, \dots, n_{k+1})$ is defined by
\[
\Phi(n_1, \dots, n_{k+1})
:=
- \bigg( \sum_{\l=1}^{k+1} n_\l \bigg)^{\al}
+ \sum_{\l=1}^{k+1} n_\l^{\al}.
\]

\begin{lemma}
\label{lem:Phi1}
Let $\al \ge 1$ and $k \in \N$.
Then,
$\Phi$ defined above satisfies
\begin{equation}
|\Phi(n_1, \dots, n_{k+1})|
\ge
(\al-1)
\Big( \max_{\l= 1, \dots, k+1} n_\l \Big)^{\al-1}
\Big( \smm_{\l= 1, \dots, k+1} n_\l \Big)
\label{phase2}
\end{equation}
for $n_1, \dots, n_{k+1} \in \N$,
where $\max$ and $\smm$ denote the largest and second largest elements, respectively.
\end{lemma}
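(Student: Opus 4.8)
The plan is to reduce the estimate to the case $k+1 = 2$ and then exploit convexity of $t \mapsto t^\al$ for $\al \ge 1$. Write $M = \max_\l n_\l$ and $M_2 = \smm_\l n_\l$ for the largest and second largest of $n_1, \dots, n_{k+1}$, and reorder so that $n_1 = M$ and $n_2 = M_2$. Set $a := \sum_{\l=2}^{k+1} n_\l$, so that $n_1 + \dots + n_{k+1} = M + a$ and $a \ge M_2 \ge 1$. The key observation is that, since all $n_\l \ge 1$, the function $t \mapsto t^\al$ is convex and increasing on $[0,\infty)$, so superadditivity gives
\[
\sum_{\l=2}^{k+1} n_\l^\al \le \Big( \sum_{\l=2}^{k+1} n_\l \Big)^\al = a^\al.
\]
Therefore
\[
-\Phi(n_1,\dots,n_{k+1}) = (M+a)^\al - M^\al - \sum_{\l=2}^{k+1} n_\l^\al \ge (M+a)^\al - M^\al - a^\al,
\]
and it remains to bound the two-variable quantity $(M+a)^\al - M^\al - a^\al$ from below, keeping in mind $-\Phi$ is nonnegative (also by convexity) so $|\Phi| = -\Phi$.

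The heart of the matter is the elementary inequality
\[
(M+a)^\al - M^\al - a^\al \ge (\al - 1) M^{\al - 1} a
\qquad \text{for } M \ge a \ge 0,\ \al \ge 1.
\]
I would prove this by fixing $a$ and differentiating in $M$: let $g(M) := (M+a)^\al - M^\al - a^\al - (\al-1)M^{\al-1}a$. Then $g(a) = (2^\al - 2)a^\al - (\al-1)a^\al = (2^\al - 2 - \al + 1)a^\al \ge 0$ since $2^\al \ge 1 + \al$ for $\al \ge 1$ (again convexity). Differentiating, $g'(M) = \al(M+a)^{\al-1} - \al M^{\al-1} - \al(\al-1)M^{\al-2}a$; using $(M+a)^{\al-1} \ge M^{\al-1} + (\al-1)M^{\al-2}a$ (the tangent-line/convexity bound for the exponent $\al - 1 \ge 0$ — valid as a one-term Taylor lower bound when $\al - 1 \ge 1$, and trivially when $0 \le \al - 1 < 1$ since then $t^{\al-1}$ is concave and one argues directly, or simply notes $(M+a)^{\al-1} \ge M^{\al-1}$ suffices after handling the $\al \le 2$ range separately), one gets $g'(M) \ge 0$. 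Hence $g(M) \ge g(a) \ge 0$ for all $M \ge a$. Combining with $a \ge M_2$ and $M^{\al-1} \ge 0$, we obtain $|\Phi| \ge (\al-1)M^{\al-1}a \ge (\al-1)M^{\al-1}M_2$, which is \eqref{phase2}.

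I expect the main obstacle to be handling the range $1 \le \al < 2$ cleanly, where $t \mapsto t^{\al-1}$ is concave rather than convex, so the naive tangent-line bound for $(M+a)^{\al-1}$ goes the wrong way. The remedy is to avoid differentiating twice: instead write $(M+a)^\al - M^\al = \al \int_0^a (M+t)^{\al-1}\, dt \ge \al \int_0^a \max(M, t)^{\al-1}\, dt$ and split the integral at $t = a \le M$ to get $\ge \al a M^{\al-1}$ directly, then subtract $a^\al \le \al \int_0^a (a)^{\al-1}\,dt$... more carefully, use $a^\al = \int_0^a \al t^{\al-1}\,dt$ and $(M+t)^{\al-1} - t^{\al-1} \ge M^{\al-1} - (\text{something})$; the cleanest uniform argument is $(M+a)^\al - M^\al - a^\al \ge \al\int_0^a\big((M+t)^{\al-1} - t^{\al-1}\big)dt$ and then, since $(M+t)^{\al-1} \ge t^{\al-1}$ is not enough, instead use $(M+t)^\al - t^\al = \al\int_0^M (s+t)^{\al-1}ds \ge \al M (t)^{\al-1}$... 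In any case, once the two-variable inequality $(M+a)^\al - M^\al - a^\al \ge (\al-1)M^{\al-1}a$ is established for all $\al \ge 1$ and $M \ge a \ge 0$ — which is a standard consequence of the mean value theorem applied twice, $(M+a)^\al - M^\al = \al\xi^{\al-1}a$ with $\xi \in (M, M+a)$ and $a^\al = \al\eta^{\al-1}a$ with $\eta \in (0,a)$, giving the difference $\al a(\xi^{\al-1} - \eta^{\al-1}) \ge \al a(M^{\al-1} - a^{\al-1}) \ge \al a \cdot \frac{\al-1}{\al}M^{\al-1}$ via one more mean value step on $M^{\al-1} - a^{\al-1}$ when $M \ge a$ — the lemma follows.
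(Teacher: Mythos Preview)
Your reduction to the two-variable quantity is sound and matches the paper's approach: the paper phrases it as an induction on $k$, but the inductive step is precisely the superadditivity $\sum_{\l\ge 2} n_\l^\al \le a^\al$ that you invoke directly. Two issues remain in the two-variable part.

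The minor one: you tacitly assume $M \ge a$, but $a = n_2 + \dots + n_{k+1}$ can exceed $M = n_1$ (take all $n_\l = 1$). This is harmless by symmetry: if $a > M$, the same two-variable bound with the roles of $M$ and $a$ swapped gives $(M+a)^\al - M^\al - a^\al \ge (\al-1)a^{\al-1}M \ge (\al-1)M^{\al-1}M_2$, since $a^{\al-1} \ge M^{\al-1}$ and $M \ge M_2$. (The paper handles this by writing the two-variable bound as $(\al-1)\max(M,a)^{\al-1}\min(M,a)$.)

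The genuine gap is in your final mean-value step. From $\xi \in (M,M+a)$ and $\eta \in (0,a)$ you correctly obtain
\[
(M+a)^\al - M^\al - a^\al = \al a(\xi^{\al-1}-\eta^{\al-1}) \ge \al a(M^{\al-1}-a^{\al-1}),
\]
but the claimed next inequality $\al(M^{\al-1}-a^{\al-1}) \ge (\al-1)M^{\al-1}$ is false: take $M=a$ and any $\al>1$, so the left side vanishes while the right side is positive. No ``one more mean value step'' can fix this, because replacing $\eta^{\al-1}$ by $a^{\al-1}$ has already discarded too much.

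The clean argument---which is exactly the paper's base case and works uniformly for all $\al \ge 1$ with no convex/concave split---avoids this detour. For $M \ge a \ge 0$,
\[
(M+a)^\al - M^\al = \int_0^a \al (M+t)^{\al-1}\,dt \ge \al M^{\al-1} a,
\]
while $a^\al = a^{\al-1}\cdot a \le M^{\al-1}a$ since $a \le M$. Subtracting yields $(M+a)^\al - M^\al - a^\al \ge (\al-1)M^{\al-1}a$, which together with $a \ge M_2$ and the symmetry remark above finishes the proof.
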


\begin{proof}
Without loss of generality,
we may assume that $n_1 \ge \dots \ge n_{k+1} \ge 1$.
We employ an induction argument.
First, we consider the case $k=1$.
A direct calculation with $n_1 \ge n_2 \ge 1$ yields that
\begin{align*}
|\Phi(n_1,n_2)|
&= (n_1+n_2)^\al - n_1^\al - n_2^\al
= n_2 \int_0^1 \al (n_1+\theta n_2)^{\al-1} d\theta - n_2^\al
\\
&\ge
(\al n_1^{\al-1} - n_2^{\al-1}) n_2
\ge
(\al-1)
n_1^{\al-1} n_2.
\end{align*}
Assume that \eqref{phase2} holds up to $k-1$ for $k \ge 2$.
Then, we have
\begin{align*}
|\Phi(n_1, \dots, n_{k+1})|
&=
\bigg( \sum_{\l=1}^{k+1} n_\l \bigg)^{\al}
- n_1^\al - \bigg( \sum_{\l=2}^{k+1} n_\l \bigg)^\al
\\
&\quad
+ \bigg( \sum_{\l=2}^{k+1} n_\l \bigg)^\al
- \sum_{\l=2}^{k+1} n_\l^\al
\\
&\ge
(\al-1)
\max \bigg( n_1, \sum_{\l=2}^{k+1} n_\l\bigg)^{\al-1}
\min \bigg( n_1, \sum_{\l=2}^{k+1} n_\l \bigg)
\\
&\quad
+
(\al-1)
n_2^{\al-1}
n_3
\\
&\ge
(\al-1)
n_1^{\al-1}
n_2
,
\end{align*}
which concludes the proof.
\end{proof}

In particular, $\Phi(n_1, \dots, n_{k+1}) \neq 0$ holds.
By \eqref{vn1}, we have
\begin{equation}
\begin{aligned}
\dt \ft v(t,n)
&=
\dt
\bigg(
\frac{n}{k+1}
\sum_{\substack{n_1, \dots, n_{k+1} \in \N
\\
n_1+ \dots + n_{k+1} =n
}}
\frac{e^{it \Phi (n_1, \dots, n_{k+1})}}{\Phi (n_1, \dots, n_{k+1})}
\prod_{\l=1}^{k+1}
\ft v(t,n_\l)
\bigg)
\\
&\quad
-
\frac{n}{k+1}
\sum_{\substack{n_1, \dots, n_{k+1} \in \N
\\
n_1+ \dots + n_{k+1} =n
}}
\frac{e^{it \Phi (n_1, \dots, n_{k+1})}}{\Phi (n_1, \dots, n_{k+1})}
\dt
\bigg( \prod_{\l=1}^{k+1}
\ft v(t,n_\l)
\bigg)
.
\end{aligned}
\label{nov1}
\end{equation}
Define
\begin{equation}
\Nn_n (v)(t)
:=
\frac{n}{k+1}
\sum_{\substack{n_1, \dots, n_{k+1} \in \N
\\
n_1+ \dots + n_{k+1} =n
}}
\frac{e^{it \Phi (n_1, \dots, n_{k+1})}}{\Phi (n_1, \dots, n_{k+1})}
\prod_{\l=1}^{k+1}
\ft v(t,n_\l).
\label{Nn1}
\end{equation}
In the second part on the right-hand side of \eqref{nov1},
it may assume that the time derivative in $\dt
( \prod_{\l=1}^{k+1}
\ft v(t,n_\l)
)$ falls only on $\ft v(t,n_{k+1})$.
Then,
it follows from \eqref{nov1}, \eqref{Nn1}, and \eqref{vn1}, that
\begin{equation}
\begin{aligned}
&
\dt \ft v(t,n)
- \dt \Nn_n(v)(t)
\\
&=
-
\frac{n}{k+1}
\sum_{\substack{n_1, \dots, n_{k+1} \in \N
\\
n_1+ \dots + n_{k+1} =n
}}
\frac{e^{it \Phi (n_1, \dots, n_{k+1})}}{\Phi (n_1, \dots, n_{k+1})}
\bigg(
\prod_{\l=1}^{k}
\ft v(t,n_\l)
\bigg)
\\
&\hspace*{20pt}
\times
in_{k+1}
\sum_{\substack{m_1, \dots, m_{k+1} \in \N
\\
m_1+ \dots + m_{k+1} =n_{k+1}
}}
e^{it \Phi (m_1, \dots, m_{k+1})}
\bigg(
\prod_{\l'=1}^{k+1}
\ft v(t, m_{\l'})
\bigg)
\\
&=: \Bn_n(v)(t)
.
\end{aligned}
\label{Bn1}
\end{equation}
We also define
\begin{align*}
\Nn (v)(t,x)
&:=
\sum_{n=1}^\infty \Nn_n (v)(t) e^{inx},
\\
\Bn (v)(t,x)
&:=
\sum_{n=1}^\infty \Bn_n (v)(t) e^{inx}.
\end{align*}
Then,
$v$ satisfies the integral equation
\begin{equation}
v(t)
= \phi + \Nn(v)(t) - \Nn(\phi) (0)
+ \int_0^t \Bn(v)(t') dt'.
\label{norv2}
\end{equation}
We solve this integral equation by using the contraction argument.

\begin{proposition}
\label{prop:nor2}
Let $\al \ge 3$, $k \in \N$, and $\phi \in H^1_+(\T)$ with $\| \phi \|_{H^1} \ll 1$.
Then,
for any $T \in (0,1]$,
there exists a unique solution $v \in C([0,T]; H^1_+(\T))$ to \eqref{norv2}.
\end{proposition}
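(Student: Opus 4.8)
The plan is to solve the integral equation \eqref{norv2} by the Banach fixed point theorem on a ball of $C([0,T];H^1_+(\T))$. Writing
\[
\Gamma(v)(t):=\phi+\Nn(v)(t)-\Nn(\phi)(0)+\int_0^t\Bn(v)(t')\,dt',
\]
we first observe that $\Gamma$ maps $C([0,T];H^1_+(\T))$ into itself: since $\Nn_n$ and $\Bn_n$ are defined only for $n\ge1$ and $\phi\in H^1_+(\T)$, each $\Gamma(v)(t)$ has Fourier support in $\N$, while continuity of $t\mapsto\Nn(v)(t)$ and $t\mapsto\int_0^t\Bn(v)$ into $H^1(\T)$ is checked routinely once the multilinear bounds below are available. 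The heart of the proof is the pair of estimates
\[
\|\Nn(v)(t)\|_{H^1}\les\|v(t)\|_{H^1}^{k+1},\qquad\|\Bn(v)(t)\|_{H^1}\les\|v(t)\|_{H^1}^{2k+1},
\]
together with their difference (Lipschitz) versions, which are automatic from multilinearity. Granting these, with $R:=2\|\phi\|_{H^1}\ll1$ one has, for every $T\in(0,1]$,
\[
\|\Gamma(v)\|_{C([0,T];H^1)}\le\tfrac{R}{2}+CR^{k+1}+CR^{2k+1}\le R
\]
on the ball $B_R:=\{v:\|v\|_{C([0,T];H^1)}\le R\}$, together with a contraction estimate there with small constant; in particular no smallness of $T$ is needed, since the only non-integral nonlinear contribution $\Nn(v)(t)-\Nn(\phi)(0)$ already carries the high power $k+1$ of the small quantity $\|\phi\|_{H^1}$. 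Hence a unique fixed point exists in $B_R$.

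For the bound on $\Nn$, the gain is entirely dispersive. The frequencies $n_1,\dots,n_{k+1}$ lie in $\N$ and sum to $n$, so $\max_\l n_\l\ge n/(k+1)$; since the second largest of them is $\ge1$, Lemma~\ref{lem:Phi1} gives $|\Phi(n_1,\dots,n_{k+1})|\ges n^{\al-1}$, whence $n/|\Phi|\les n^{2-\al}\le n^{-1}$ because $\al\ge3$. Therefore
\[
|\Nn_n(v)(t)|\les n^{-1}\sum_{\substack{n_1,\dots,n_{k+1}\in\N\\ n_1+\dots+n_{k+1}=n}}\prod_{\l=1}^{k+1}|\ft v(t,n_\l)|=n^{-1}\widehat{g^{k+1}}(n),
\]
where $g$ is the function with Fourier coefficients $|\ft v(t,m)|$ (so $\|g\|_{H^1}=\|v(t)\|_{H^1}$, with nonnegative coefficients supported in $\N$). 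Since $\jb{n}^2 n^{-2}\le2$ and $H^1(\T)\hookrightarrow L^\infty(\T)$,
\[
\|\Nn(v)(t)\|_{H^1}^2\les\sum_{n\ge1}|\widehat{g^{k+1}}(n)|^2=\|g^{k+1}\|_{L^2}^2\le\|g\|_{L^\infty}^{2k}\|g\|_{L^2}^2\les\|g\|_{H^1}^{2(k+1)}.
\]

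The estimate on $\Bn$ is the main obstacle. Recall that $\Bn_n(v)$ is the $(2k+1)$-linear expression obtained by substituting \eqref{vn1} for $\dt\ft v(t,n_{k+1})$, so that $n_{k+1}$ is itself split as $m_1+\dots+m_{k+1}$ with all $m_{\l'}\in\N$; the multiplier to control is $n\,n_{k+1}/|\Phi(n_1,\dots,n_{k+1})|$, and we split into two cases. If $n_{k+1}$ is \emph{not} a largest element of $\{n_1,\dots,n_{k+1}\}$, then $n_{k+1}$ is at most the second largest, so Lemma~\ref{lem:Phi1} together with $\max_\l n_\l\ge n/(k+1)$ gives $|\Phi|\ges n^{\al-1}n_{k+1}$, hence $n\,n_{k+1}/|\Phi|\les n^{2-\al}\le n^{-1}$, and this contribution is handled exactly as for $\Nn$, reducing to $\|g^{2k+1}\|_{L^2}\les\|v(t)\|_{H^1}^{2k+1}$. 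If instead $n_{k+1}$ \emph{is} a largest element, then $n\le(k+1)n_{k+1}$ while $|\Phi|\ges n_{k+1}^{\al-1}$, so the multiplier is only $\les n_{k+1}^{3-\al}\les1$ --- no decay at all --- but now the Sobolev weight can be relocated onto an inner frequency: $\jb{n}\les\jb{n_{k+1}}\les\sum_{\l'}\jb{m_{\l'}}\les\max_{\l'}\jb{m_{\l'}}$, so after distributing this maximum,
\[
\jb{n}\,|\Bn_n(v)(t)|\les\widehat{g^{2k}\wt g}(n),
\]
where $\wt g$ has Fourier coefficients $\jb{m}|\ft v(t,m)|$ (so $\|\wt g\|_{L^2}=\|v(t)\|_{H^1}$). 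Using $H^1(\T)\hookrightarrow L^\infty(\T)$ again,
\[
\|\Bn(v)(t)\|_{H^1}^2\les\|g^{2k}\wt g\|_{L^2}^2\le\|g\|_{L^\infty}^{4k}\|\wt g\|_{L^2}^2\les\|g\|_{H^1}^{4k}\|v(t)\|_{H^1}^2.
\]

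Finally, uniqueness in all of $C([0,T];H^1_+(\T))$ is obtained from uniqueness in $B_R$ by a standard continuity argument. Any solution of \eqref{norv2} satisfies $v(0)=\phi$: at $t=0$ the equation reads $(\Id-\Nn(\cdot)(0))(v(0))=(\Id-\Nn(\cdot)(0))(\phi)$, and $f\mapsto f-\Nn(f)(0)$ is a near-identity map on a neighbourhood of $0$ in $H^1_+(\T)$ (its nonlinear part has size $\les\|f\|_{H^1}^{k+1}$), so $\|v(0)\|_{H^1}\ll1$ forces $v(0)=\phi$; then the multilinear bounds and a bootstrap show that the maximal interval on which $\|v(t)\|_{H^1}\le R$ is all of $[0,T]$, where $v$ must coincide with the fixed point. (In the application to Proposition~\ref{prop:welH+} the identity $v(0)=\phi$ is in any case automatic from Definition~\ref{def:sol}.) I expect the $\Bn$ bound in the case where $n_{k+1}$ is a largest frequency --- where the dispersive gain is exhausted and the Sobolev derivative has to be absorbed by one of the inner frequencies $m_{\l'}$ --- to be the only genuinely delicate point; the remaining steps are a routine contraction and bootstrap.
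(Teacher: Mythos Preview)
Your proof is correct and follows the same contraction--bootstrap scheme as the paper: multilinear $H^1$ bounds on $\Nn$ and $\Bn$ via Lemma~\ref{lem:Phi1}, a fixed point in the ball of radius $2\|\phi\|_{H^1}$, and a continuity argument to force any solution into that ball. The only difference is that your case split in the $\Bn$ estimate is unnecessary: since $n\le(k+1)\max_\l n_\l$, $n_{k+1}\le\max_\l n_\l$, and $\smm_\l n_\l\ge1$, Lemma~\ref{lem:Phi1} gives $n\,n_{k+1}/|\Phi|\les(\max_\l n_\l)^{3-\al}\le1$ uniformly for $\al\ge3$, after which the paper simply distributes the weight $\jb{n}$ onto the largest of the $2k+1$ inner frequencies via Young's inequality---exactly your second case, applied once and for all.
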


\begin{proof}
First, we show the existence of a solution.
From
\eqref{Nn1}, \eqref{phase2}, and $\al \ge 3$,
and Young's convolution inequality,
we have
\[
\begin{aligned}
\| \Nn (v)(t) \|_{H^1}
&\les
\big\| |\ft v(t)| \ast \dots \ast |\ft v(t)| \big\|_{\l^2_n}
\\
&\les
\| \ft v(t) \|_{\l^2_n}
\| \ft v(t) \|_{\l^1_n}^{k}
\les
\| v(t) \|_{H^1}^{k+1}
\end{aligned}
\]
for $v \in C([0,T]; H^1_+(\T))$ and $0 \le t \le T$.
The same calculation with \eqref{Bn1} yields that
\begin{align*}
\| \Bn (v)(t) \|_{H^1}
&\les
\big\|
\jb{n} (|\ft v(t)| \ast \dots \ast |\ft v(t)|) \big\|_{\l^2_n}
\\
&\les
\| \jb{n} \ft v(t) \|_{\l^2_n}
\| \ft v(t) \|_{\l^1_n}^{2k}
\les
\| v(t) \|_{H^1}^{2k+1}
\end{align*}
for $v \in C([0,T]; H^1_+(\T))$ and $0 \le t \le T$.
Moreover,
we obtain that
\begin{align}
&
\begin{aligned}
&\| \Nn (v_1)(t) - \Nn (v_2)(t) \|_{H^1}
\\
&\les
\| v_1(t)-v_2(t) \|_{H^1}
\big(
\| v_1(t) \|_{H^1} + \| v_2(t) \|_{H^1}
\big)
^{k},
\end{aligned}
\label{eNn3}
\\
&
\begin{aligned}
&\| \Bn (v_1)(t) - \Bn (v_2)(t) \|_{H^1}
\\
&\les
\| v_1(t)-v_2(t) \|_{H^1}
\big(
\| v_1(t) \|_{H^1} + \| v_2(t) \|_{H^1}
\big)
^{2k}
\end{aligned}
\label{eNn4}
\end{align}
for $v_1, v_2 \in C([0,1]; H^1_+(\T))$ and $0 \le t \le T$.

Define
\begin{equation}
\G (v) (t)
:=
\phi
+ \Nn (v)(t) - \Nn (\phi)(0)
+ \int_0^t \Bn (v)(t') dt'
\label{ConG1}
\end{equation}
for $v \in C([0,T]; H^1_+(\T))$.
It follows from \eqref{eNn3}, \eqref{eNn4}, and $0<T \le 1$
that $\G$ is a contraction mapping on
\[
\Big\{ v \in C([0,T]; H^1_+(\T)) \mid \sup_{0 \le t \le T} \| v (t) \|_{H^1} \le 2 \| \phi \|_{H^1} \Big\}
\]
provided that $\phi \in H^1_+(\T)$ satisfies $\| \phi \|_{H^1} \ll 1$.
The Banach fixed-point theorem shows that
there exists $v \in C([0,T]; H^1_+(\T))$
satisfying
$v = \G(v)$.
Namely, $v$ solves \eqref{norv2}.

Next, we show the (unconditional) uniqueness.
Let $v \in C([0,T]; H^1_+(\T))$ be a solution to \eqref{norv2}.
Define
\[
t_\ast := \sup \{ t \in [0,T] \mid \| v(t) \|_{H^1} < 2 \| \phi \|_{H^1} \}.
\]
By $v|_{t=0}=\phi$, we have $t_\ast >0$.
We prove $t_\ast =T$ by using a contradiction argument.
If $t_\ast <T$,
it follows from $v \in C([0,T]; H^1_+(\T))$ that $\| v(t_\ast) \|_{H^1} =2 \|\phi \|_{H^1}$.
Then,
\eqref{norv2} with \eqref{eNn3}, \eqref{eNn4}, and $\| \phi \|_{H^1} \ll 1$ yields that
\begin{align*}
\| v(t_\ast) \|_{H^1}
&\le \| \phi \|_{H^1}
+ C
\big( \| v(t_\ast) \|_{H^1}^{k+1} + \| \phi \|_{H^1}^{k+1}
+ \| v(t_\ast) \|_{H^1}^{2k+1} \big)
\\
&= \| \phi \|_{H^1}
+ C
\big( (2\| \phi  \|_{H^1})^{k+1} + \| \phi \|_{H^1}^{k+1}
+ (2\| \phi \|_{H^1})^{2k+1} \big)
\\
&< 2 \| \phi \|_{H^1}.
\end{align*}
This contradicts to $\| v(t_\ast) \|_{H^1} =2 \|\phi \|_{H^1}$.
Hence, we obtain $t_\ast =T$.
\end{proof}

\begin{proof}[Proof of Proposition \ref{prop:welH+} for $\al \ge 3$]
Let $u \in C([0,T]; H^1_+(\T))$ be a solution to \eqref{NLSk}.
Then,
$v$ defined in \eqref{norv1} solves \eqref{norv2}.
The uniqueness of a solution to \eqref{NLSk} follows from Proposition \ref{prop:nor2}.

From the proof of Proposition \ref{prop:nor2},
$v$ is a limit of the sequence $\{ v^{(\l)} \}_{\l \in \N}$ in $C([0,T]; H^1_+(\T))$
defined by
$v^{(1)} (t) := \phi$ and
\[
v^{(\l+1)} := \G(v^{(\l)})
\]
for $\l \in \N$,
where $\G$ is given in \eqref{ConG1}.
Since
$u (t) = e^{it (-\dx^2)^{\frac \al2}} v(t)$,
we obtain \eqref{supp0}.
\end{proof}

\subsection{Gauge transformation}
\label{subsec:GT}

In this subsection,
we prove Proposition \ref{prop:welH+} for $\al=2$
by using the gauge transformation as in \cite{Oza98}.

\begin{remark}
\label{rem:int01}
\rm
If $f \in H^1_+(\T)$,
then $f^k \in H^1_+(\T)$.
In particular,
$f^k$ is well-defined and
\[
\int_{-\pi}^\pi f(x)^k dx =0.
\]
Namely,
$\int_0^x f(y)^k dy \in H^2_+(\T)$.
\end{remark}

Set
\[
\L := \dt + i \dx^2.
\]
For suitable functions $f$ and $\Ld$,
we have
\begin{equation}
e^{\Ld} \L (e^{-\Ld} f)
=
\L f
+ (-\L \Ld + i (\dx \Ld)^2) f
-2i (\dx \Ld) \dx f.
\label{cal1}
\end{equation}

Let $u \in C([0,T]; H^2_+(\T))$ be a solution to \eqref{NLSk} with $\al=2$.
We have
\begin{equation}
\L \dx u
= k u^{k-1} (\dx u)^2 + u^k \dx^2u.
\label{du1}
\end{equation}
Set
\begin{equation}
\Ld (t,x) := \frac 1{2i} \int_0^x u(t,y)^k dy.
\label{Ld1}
\end{equation}
By Remark \ref{rem:int01},
this primitive is well-defined.
A direct calculation shows that
\begin{align*}
\dt \Ld (t,x)
&= \frac k{2i} \int_0^x u(t,y)^{k-1} \dt u(t,y) dy
\\
&= \frac k{2i}\int_0^x \big( -i u(t,y)^{k-1} \dy^2 u(t,y)
+ u(t,y)^{2k-1} \dy u(t,y) \big)
dy
\\
&= -\frac k{2}
\bigg(
u(t,x)^{k-1} \dx u(t,x) - u(t,0)^{k-1} \dx u(t,0)
\\
&\hspace*{40pt}
-
(k-1)
\int_0^x u(t,y)^{k-2} (\dy u(t,y))^2 dy
\bigg)
\\
&\quad
+
\frac 1{4i}
\big(
u(t,x)^{2k} - u(t,0)^{2k} \big).
\end{align*}
Note that the third term on the right-hand side disappears when $k=1$.
Hence,
we have
\begin{equation}
\begin{aligned}
&\L \Ld (t,x)
\\
&=
\frac k{2}
\bigg(
u(t,0)^{k-1} \dx u(t,0)
+
(k-1)
\int_0^x u(t,y)^{k-2} (\dy u(t,y))^2 dy
\bigg)
\\
&\quad
+
\frac 1{4i}
\big(
u(t,x)^{2k} - u(t,0)^{2k} \big).
\end{aligned}
\label{LLd}
\end{equation}
From \eqref{du1}, \eqref{Ld1}, \eqref{LLd} and \eqref{cal1} with $f=\dx u$,
we obtain
\begin{equation}
\begin{aligned}
&e^{\Ld} \L (e^{-\Ld} \dx u) (t,x)
\\
&=
k \big( u^{k-1} (\dx u)^2 \big)(t,x)
\\
&\quad
-
\frac k{2}
\bigg(
\big( u^{k-1} \dx u \big)(t,0)
\\
&\hspace*{50pt}
+
(k-1)
\int_0^x (u^{k-2} \big( \dy u)^2 \big)(t,y) dy
\bigg)
\dx u (t,x)
\\
&\quad
+
\frac 1{4i}
u(t,0)^{2k}
\dx u (t,x).
\end{aligned}
\label{uu1}
\end{equation}

Set
\begin{equation}
\uu := e^{-\Ld} \dx u.
\label{uug}
\end{equation}
From \eqref{NLSk} with $\al =2$ and \eqref{uu1},
$u$ and $\uu$ satisfy
\begin{equation}
\begin{aligned}
\L u (t,x) &= \big( u^k e^{\Ld} \uu \big)(t,x),
\\
\L \uu (t,x) &=
k \big( u^{k-1} e^{\Ld} \uu^2\big)(t,x)
\\
&\quad
-
\frac k{2}
\bigg(
\big( u^{k-1} e^{\Ld} \uu \big) (t,0)
\\
&\hspace*{50pt}
+
(k-1)
\int_0^x \big(u^{k-2} e^{2 \Ld } \uu^2 \big) (t,y) dy
\bigg)
\uu (t,x)
\\
&\quad
+
\frac 1{4i}
u(t,0)^{2k}
\uu (t,x).
\end{aligned}
\label{gasys}
\end{equation}
Here, $\Ld$ is defined in \eqref{Ld1}.
Note that
the nonlinear parts of the system \eqref{gasys} have no derivatives.
Hence,
(even without the condition \eqref{uug},)
the standard contraction mapping theorem
yields the following.

\begin{proposition}
\label{prop:gagesys2}
Let $k \in \N$ and $\phi, \psi \in H^1_+(\T)$ with
$\| \phi \|_{H^1} + \| \psi \|_{H^1} \ll 1$.
Then,
for any $T \in (0,1]$,
there exists a unique solution
\[
(u,\uu) \in C([0,T]; H^1_+(\T) \times H^1_+(\T))
\]
to
\eqref{gasys}
with $(u,\uu)_{|t=0} (\phi, \psi)$.
\end{proposition}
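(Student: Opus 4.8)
The plan is to make precise the assertion, used just above the statement, that the derivative-free system \eqref{gasys} is solved by a standard contraction argument. Write $S(t):=e^{it(-\dx^2)}$ for the propagator associated with $\L=\dt+i\dx^2$; it has symbol $e^{itn^2}$, is unitary on $H^r(\T)$ for every $r\in\R$, and, being a Fourier multiplier, maps the closed subspace $H^1_+(\T)$ into itself. By Duhamel's principle, finding $(u,\uu)\in C([0,T];H^1_+(\T)\times H^1_+(\T))$ solving \eqref{gasys} with $(u,\uu)|_{t=0}=(\phi,\psi)$ is equivalent to finding a fixed point of $\G=(\G_1,\G_2)$, where
\begin{align*}
\G_1(u,\uu)(t)&:=S(t)\phi+\int_0^t S(t-t')F_1(u,\uu)(t')\,dt',\\
\G_2(u,\uu)(t)&:=S(t)\psi+\int_0^t S(t-t')F_2(u,\uu)(t')\,dt',
\end{align*}
$F_1,F_2$ denoting the nonlinear right-hand sides in \eqref{gasys} and $\Ld$ the primitive \eqref{Ld1} determined by $u$.

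First I would check that $\G$ preserves $C([0,T];H^1_+(\T)\times H^1_+(\T))$ with the expected bounds. Recall that $H^1(\T)$ is a Banach algebra continuously embedded in $C(\T)$. By Remark \ref{rem:int01}, for $u(t)\in H^1_+(\T)$ the function $\Ld(t,\cdot)$ lies in $H^1_+(\T)$ with $\|\Ld(t)\|_{H^1}\les\|u(t)\|_{H^1}^k$, hence $e^{\Ld(t)}-1=\sum_{j\ge1}\Ld(t)^j/j!\in H^1_+(\T)$ with $\|e^{\Ld(t)}-1\|_{H^1}\le e^{\|\Ld(t)\|_{H^1}}-1$, and likewise for $e^{2\Ld(t)}$; in particular $e^{\Ld(t)},e^{2\Ld(t)}$ have nonnegative Fourier support. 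Each term of $F_1,F_2$ then carries at least one factor in $H^1_+(\T)$ — one of $u(t)^k$, $\uu(t)$, $\uu(t)^2$ — multiplied by factors in $H^1_{\ge 0}(\T)$ or by scalars (such as $u(t,0)$ and $(u^{k-1}e^{\Ld}\uu)(t,0)$, well defined since $H^1(\T)\hookrightarrow C(\T)$) and by the inner $x$-primitive, which by Remark \ref{rem:int01} again produces an $H^1_{\ge 0}(\T)$-function; since $H^1_+(\T)\cdot H^1_{\ge 0}(\T)\subseteq H^1_+(\T)$, every term of $F_i$ lies in $H^1_+(\T)$. Combining this with the algebra estimate and $H^1(\T)\hookrightarrow C(\T)$, and noting that every term of $F_i$ has degree $\ge k+1\ge 2$ in $(u,\uu)$, one obtains, uniformly for $\|u(t)\|_{H^1}+\|\uu(t)\|_{H^1}\le 1$,
\begin{equation*}
\|F_i(u,\uu)(t)\|_{H^1}\les\big(\|u(t)\|_{H^1}+\|\uu(t)\|_{H^1}\big)^{k+1}\qquad(i=1,2),
\end{equation*}
together with the matching Lipschitz bound in which the difference of the two arguments is multiplied by a power of the sum of their $H^1$-norms; the Lipschitz bound uses $e^{\Ld_1}-e^{\Ld_2}=(\Ld_1-\Ld_2)\int_0^1 e^{\Ld_2+\ta(\Ld_1-\Ld_2)}\,d\ta$ and the algebra property once more. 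Continuity in $t$ of $\G(u,\uu)$ follows from strong continuity of $S(\cdot)$ on $H^1(\T)$ and continuity of $t\mapsto F_i(u,\uu)(t)$ in $H^1(\T)$.

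Since $S(t)$ is unitary on $H^1(\T)$ and $0<T\le 1$, these estimates show that for $\|\phi\|_{H^1}+\|\psi\|_{H^1}\ll 1$ the map $\G$ is a contraction on the closed ball
\begin{equation*}
B:=\Big\{(u,\uu)\in C([0,T];H^1_+(\T)\times H^1_+(\T))\ \Big|\ \sup_{0\le t\le T}\big(\|u(t)\|_{H^1}+\|\uu(t)\|_{H^1}\big)\le 2\big(\|\phi\|_{H^1}+\|\psi\|_{H^1}\big)\Big\},
\end{equation*}
so the Banach fixed-point theorem produces a solution $(u,\uu)\in B$, unique in $B$. To upgrade this to uniqueness in the whole space $C([0,T];H^1_+(\T)\times H^1_+(\T))$, I would argue exactly as in the uniqueness part of the proof of Proposition \ref{prop:nor2}: given any solution, set $t_\ast:=\sup\{t\in[0,T]\mid \|u(t)\|_{H^1}+\|\uu(t)\|_{H^1}<2(\|\phi\|_{H^1}+\|\psi\|_{H^1})\}$ and use the Duhamel identity with the above estimates and $\|\phi\|_{H^1}+\|\psi\|_{H^1}\ll 1$ to exclude $t_\ast<T$; then every solution lies in $B$ and hence coincides with the fixed point.

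The only point requiring genuine care — and it is bookkeeping rather than real difficulty — is to track that the nonlocal pieces of \eqref{gasys} (the pointwise values at $x=0$ and the $x$-antiderivative) and, above all, the gauge factor $e^{\Ld}$, which is an infinite power series in the $H^1$-unknown $u$, respect the subspace $H^1_+(\T)$ and obey algebra-type estimates. This is immediate once one uses that $H^1(\T)$ is a Banach algebra embedded in $C(\T)$ and that $e^{\Ld}$ has nonnegative Fourier support, and it ultimately rests on the fact — the whole purpose of the gauge transformation \eqref{uug} — that \eqref{gasys} contains no derivatives, so the contraction closes with no loss of regularity.
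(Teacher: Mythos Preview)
Your proposal is correct and follows essentially the same contraction-mapping argument as the paper: Duhamel formulation, algebra estimates in $H^1(\T)$, control of $e^{\Ld}$, and contraction on a small ball in $C([0,T];H^1_+\times H^1_+)$. The differences are cosmetic---you use the power series for $e^{\Ld}-1$ where the paper uses the integral identity $e^{\Ld}-1=\int_0^1\Ld e^{\theta\Ld}\,d\theta$, you work with a sum ball rather than the paper's product ball, and you spell out the $t_\ast$ continuation argument for unconditional uniqueness that the paper leaves as ``a standard argument''---but the substance is the same.
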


\begin{proof}
Set
\[
X_T := C([0,T]; H^1_+(\T))
\]
equipped with the norm
\[
\| f \|_{X_T} := \sup_{0 \le t \le T} \| f(t) \|_{H^1}
\]
for $f \in X_T$.
Define
\[
\G(u,\uu) := (\G_1(u,\uu), \G_2(u,\uu))
\]
for $u, \uu \in X_T$,
where
\begin{align*}
\G_1(u,\uu) (t,x)
&:=
e^{-it \dx^2} \phi(x)
+ \int_0^t e^{-i (t-t') \dx^2} (u^k e^{\Ld} \uu)(t',x) dt',
\\
\G_2(u,\uu) (t,x)
&:=
e^{-it \dx^2} \psi (x)
+ \int_0^t e^{-i (t-t') \dx^2}
\bigg\{
k (u^{k-1} e^{\Ld} \uu^2)(t',x)
\\
&\quad
-
\frac k{2}
\bigg(
\big( u^{k-1} e^{\Ld} \uu \big) (t',0)
\\
&\hspace*{50pt}
+
(k-1)
\int_0^x \big( u^{k-2} e^{2 \Ld} \uu^2 \big) (t',y) dy
\bigg)
\uu (t',x)
\\
&\quad
+
\frac 1{4i}
u(t',0)^{2k}
\uu (t',x)
\bigg\}
dt',
\end{align*}
and $\Ld$ is given in \eqref{Ld1}.

Note that
\[
\| \Ld \|_{X_T}
\les \sup_{0 \le t \le T} \big( \| \Ld(t) \|_{L^2} + \| \dx \Ld (t) \|_{L^2} \big)
\les
\| u \|_{X_T}^k
\]
for $u \in X_T$.
Moreover, we have
\begin{equation}
\begin{aligned}
\| e^{\Ld}-1 \|_{X_T}
&\les
\int_0^1
\| \Ld e^{ \theta \Ld} \|_{X_T} d\theta
\les
e^{C \| u \|_{X_T}^k}
\| u \|_{X_T}^k
\end{aligned}
\label{expb}
\end{equation}
for $u \in X_T$.
It follows from
\eqref{expb} and $0 < T \le 1$ that
\begin{align*}
\| \G_1(u,\uu) \|_{X_T}
- \| \phi \|_{H^1}
&\le
\| u^k e^{\Ld} \uu \|_{X_T}
\les
\| u \|_{X_T}^k
\| \uu \|_{X_T}
\big(
1+ 
e^{C \| u \|_{X_T}^k}
\| u \|_{X_T}^k \big),
\\
\| \G_2(u,\uu) \|_{X_T}
- \| \psi \|_{H^1}
&\les
\| u^{k-1} e^{\Ld} \uu^2 \|_{X_T}
+ \| u^{k-1} e^{\Ld} \uu \|_{X_T}
\\
&
\quad
+ (k-1) \|u^{k-2} e^{2\Ld} \uu^2 \|_{X_T}
+ \| u^{2k} \uu \|_{X_T}
\\
&\les
\big(
\| u \|_{X_T}^{k-1} \| \uu \|_{X_T}^2
+
\| u \|_{X_T}^{k-1} \| \uu \|_{X_T}
\\
&\quad
\qquad
+
(k-1)
\| u \|_{X_T}^{k-2}
\| \uu \|_{X_T}^2
\big)
\big(
1+ 
e^{2 C \| u \|_{X_T}^k}
\| u \|_{X_T}^k \big)
\\
&\quad
+
\| u \|_{X_T}^{2k}
\| \uu \|_{X_T}
\end{align*}
for $u,\uu \in X_T$.
Note that the terms with the coefficient $(k-1)$ disappear when $k=1$.
Similarly,
we obtain difference estimates.
Hence,
$\G$ is a contraction mapping on
\[
\{ (u,\uu) \in X_T \times X_T \mid
\| u \|_{X_T} \le 2 \| \phi \|_{H^1}, \
\| \uu \|_{X_T} \le 2 \| \psi \|_{H^1}
\}
\]
provided that $\phi, \psi \in H^1_+(\T)$ satisfy $\| \phi \|_{H^1} + \| \psi \|_{H^1} \ll 1$.
The uniqueness follows from a standard argument.
\end{proof}

\begin{proof}[Proof of Proposition \ref{prop:welH+} for $\al=2$]
Let $u \in C([0,T]; H^2_+(\T))$ be a solution to \eqref{NLSk} with $\al=2$.
Define $\uu$ by \eqref{uug}.
Then, $(u, \uu)$ solves \eqref{gasys}.
Set
\[
\psi(x)
= e^{- \frac 1{2i} \int_0^x \phi(y)^k dy}
\dx \phi(x).
\]
It follows from \eqref{expb} that
\[
\| \psi \|_{H^1}
\les
\| \phi \|_{H^2} \big( 1+ e^{C \| \phi \|_{H^1}^k} \| \phi \|_{H^1}^k \big).
\]
Accordingly,
we have $\| \psi \|_{H^1} \ll 1$
provided that $\| \phi \|_{H^2} \ll 1$.
Hence, the uniqueness of $u$ follows from Proposition \ref{prop:gagesys2}.
Since we apply the contraction argument to prove Proposition \ref{prop:gagesys2},
the condition
\eqref{supp0} follows from the same reason as in the proof of Proposition \ref{prop:welH+} for $\al \ge 3$.
\end{proof}

\section{Norm inflation}
\label{sec:NI}

In this section,
we prove Theorem \ref{th:1.1}.
Let ($\al =2$ or $\al \ge 3$) and $k \in \N$.
Suppose that 
$u \in C([0,T]; H^{s_0}_{\ge 0}(\T))$ is a solution to \eqref{NLSk}
for some $T>0$,
where $s_0$ is defined by \eqref{s0}.
First,
by taking a transformation,
we consider a solution with mean-zero.

It follows from Remark \ref{rem:sol1} that
$\dt \ft u(t,0) =0$.
Namely, we have
\begin{equation}
\ft u(t,0) = \ft \phi (0)
\label{cons1}
\end{equation}
for $0 \le t \le T$.
For simplicity, we set
\[
\Aa
:=
\ft{\phi}(0).
\]

Assume that
\begin{equation}
\Im \Aa^k <0.
\label{conM0}
\end{equation}
Set
\begin{equation}
\tu (t,x)
:=
\sum_{n=0}^\infty  \ft u(t,n) e^{-it \Aa^k n} e^{inx}
- \Aa 
\label{truv}
\end{equation}
for $0 \le t \le T$ and $x \in \T$.
Note that
\eqref{conM0} yields that
\[
\Re (-i \Aa^k ) = \Im \Aa^k  <0.
\]
Hence,
$\tu$ is well-defined
and
\[
\tu \in C([0,T]; H^{s_0}_+(\T)) \cap C^\infty((0,T) \times \T).
\]

It follows from \eqref{cons1} that
\[
\ft \tu (t,0)
= \ft u(t,0) - \Aa 
=0.
\]
A direct calculation shows that
\begin{align*}
&\Ft \big[ \dt \tu - i (-\dx^2)^{\frac \al2} \tu \big] (t,n)
\\
&=
\Ft
\big[ \dt u - i (-\dx^2)^{\frac \al2} u  - \Aa^k \dx u \big]
(t,n)
e^{-i t \Aa^k n}
\\
&=
\Ft \big[ (u^k - \Aa^k) \dx u \big] (t,n)
e^{-i t \Aa^k n}
\\
&=
\sum_{j=1}^k
\Ft \bigg[
\begin{pmatrix} k \\ j \end{pmatrix}
\Aa^{k-j} (u - \Aa)^{j} \dx u
\bigg] (t,n)
e^{-i t \Aa^k n}
\\
&=
\sum_{j=1}^k
\begin{pmatrix} k \\ j \end{pmatrix}
\Aa^{k-j}
\Ft[ \tu^{j} \dx \tu]
(t,n)
\end{align*}
for $0 \le t \le T$ and $n \in \N$.
Hence,
$\tu$ satisfies
\begin{equation}
\left\{
\begin{aligned}
&\dt \tu - i (-\dx^2)^{\frac \al2} \tu
=
\sum_{j=1}^k
\begin{pmatrix} k \\ j \end{pmatrix}
\Aa^{k-j} \tu^{j} \dx \tu, \\
&\tu|_{t=0} = \wt \phi,
\end{aligned}
\right.
\label{NLSkk}
\end{equation}
where
$\wt \phi := \phi - \Aa$.
Note that $\Ft [\wt \phi] (0)=0$.

\begin{proof}[Proof of Theorem \ref{th:1.1}]
For $s \ge s_0$ and $N\in \N$ with $N \ge 3$,
we take the initial data $\phi$ as
\[
\phi (x)
=
\frac{e^{i{\frac{3\pi}{2k}}} + N^{-s}e^{iNx}}{\log N}.
\]
Note that
\begin{equation}
\Aa^k = \frac{-i}{(\log N)^k}.
\label{conM1}
\end{equation}
Hence, \eqref{conM0} is satisfied.
Moreover,
we have
\begin{equation}
\| \phi \|_{H^{s_0}}
\le
\| \phi \|_{H^s}
\le \frac 2 {\log N}.
\label{inis}
\end{equation}

Let $\s \in \R$.
Set
\begin{equation}
T := (|\s-s|+1) \frac{(\log N)^{k+1}}{N}.
\label{tN}
\end{equation}
Suppose that $u \in C([0,T]; H^s(\T))$ is a solution to \eqref{NLSk}.
Then, $\tu$ defined in \eqref{truv} satisfies \eqref{NLSkk}.
Namely, we have
\begin{equation}
\begin{aligned}
\ft \tu (t,n)
= e^{it n^\al} \ft \phi(n)
+
\frac{in}{k+1}
\int_0^t
&
e^{i (t-t') n^\al}
\sum_{j=1}^k
\begin{pmatrix} k \\ j \end{pmatrix}
\Aa^{k-j}
\\
&
\times
\sum_{\substack{n_1, \dots, n_{j+1} \in \N \\ n_1+\dots+n_{j+1}=n}}
\prod_{\l=1}^{j+1}
\ft \tu (t',n_\l)
dt'
\end{aligned}
\label{ftwn}
\end{equation}
for $0 \le t \le T$ and $n \in \N$.

By \eqref{inis} and taking $N \gg 1$,
the assumption in Corollary \ref{cor:UU} holds true.
Then, for $0 \le t \le T$ and $N \gg 1$,
we have
\[
\ft \tu (t,n) =0
\]
unless $n= mN$ for some $m \in \N$.
Hence,
the second term on the right-hand side of \eqref{ftwn} vanishes when $n=N$.
In particular,
we obtain
\begin{equation}
|\ft \tu(t,N)|
=
|\ft \phi(N)|
=
\frac{N^{-s}}{\log N}
\label{wtN}
\end{equation}
for $0 \le t \le T$ and $N \gg 1$.
It follows from \eqref{truv}, \eqref{conM1}, and \eqref{wtN} that
\begin{equation}
\begin{aligned}
\| u(T) \|_{H^\s}
&\ge
N^\s |\ft u(T,N)|
=
N^\s \big| \ft \tu (T,N) e^{i T \Aa^k N} \big|
\\
&=
\frac{N^{\s-s}}{\log N} \cdot N^{|\s-s|+1}
\ge \frac N{\log N}
\end{aligned}
\label{nft1}
\end{equation}
for $N \gg 1$.

For any $\eps>0$,
there exists $N \in \N$ with $N \gg 1$ and
\[
\frac 2{\log N} <\eps, \quad
(|\s-s|+1) \frac{(\log N)^{k+1}}{N} <
\min( \eps, 1), \quad
\frac N{\log N} > \eps^{-1}.
\]
From \eqref{inis}, \eqref{tN}, and \eqref{nft1},
we obtain Theorem \ref{th:1.1}.
\end{proof}

\mbox{}

\noindent
{\bf 
Acknowledgements.}
M.O.~was supported by JSPS KAKENHI Grant number JP23K03182.

\end{document}